\numberwithin{equation}{section}
\newcommand{\bC}{{\mathbb C}}
\newcommand{\bK}{{\mathbb K}}
\newcommand{\bN}{{\mathbb N}}
\newcommand{\bR}{{\mathbb R}}
\newcommand{\bZ}{{\mathbb Z}}
\newcommand{\tit}{\textit}
\newcommand{\sub}{\subset}
\newcommand{\wtilde}{\widetilde}
\newcommand{\bs}{\backslash}
\newcommand{\what}{\widehat}
\newtheorem{thm}{Theorem}[section]
\newtheorem{theorem}[thm]{Theorem}
\newtheorem{property}[thm]{Property}
\newtheorem{corollary}[thm]{Corollary}
\newtheorem{definition}[thm]{Definition}
\newtheorem{remark}[thm]{Remark}
\newtheorem{example}[thm]{Example}
\newtheorem{lemma}[thm]{Lemma}
\newtheorem*{claim}{Claim}
\begin{document}
\title[On the Theory of Stein Manifolds]{On the Theory of Stein Manifolds}
\author[Tran]{Dustin Tran}
\date{May 14, 2012}

\begin{abstract} This paper examines the broad structure on Stein manifolds and how it generalizes the notion of a domain of holomorphy in $\bC^n$. Along with this generalization, we see that Stein manifolds share key properties from domains of holomorphy, and we prove one of these major consequences. In particular, we investigate an equivalence, similar to domains of holomorphy and pseudoconvexity, on the class of manifolds. Then, we examine the canonical symplectic structure of Stein manifolds inherited from this equivalence, and how its symplectic topology develops. \end{abstract}
\maketitle

\section{Introduction} The class of Stein manifolds was first introduced by Karl Stein in 1951 \cite{St}, originally under the name of holomorphically complete manifolds. Based on a set of three axioms, Stein sought to generalize the concept of a domain of holomorphy in $\bC^n$ to complex manifolds. After initial developments, further contributions were made by H. Cartan \cite{C}, who proved vast generalizations of the first and second Cousin problems, now known as Cartan's Theorem A and B. Other notable results were discovered by K. Oka \cite{O2}, known previously for his solutions to the Cousin problems and his work on domains of holomorphy, and E. Bishop \cite{B}, who proved an embedding theorem for Stein manifolds. For the interested reader, R. Remmert's comprehensive survey article \cite{R} provides a thorough historical review.

This paper concerns Stein manifolds based on its primary motivation, and builds up some familiar properties from domains of holomorphy. In particular, we show that being Stein is equivalent to a certain notion of pseudoconvexity. Then by using the argument in the proof, we prove as a corollary that one of the axioms defining Stein manifolds is implicit from the other two. Lastly, we see how the admittance of a plurisubharmonic function leads us into questioning the natural symplectic structure that arises, laying down modern results on symplectic topology.

\section{Preliminary Concepts}
\subsection{Domains of Holomorphy, Holomorph-Convex, and Pseudoconvex.} We first review some preliminary notions prior to defining Stein manifolds, defining three equivalent definitions for a particular class of open sets in $\bC^n$. Given an open set $\Omega\sub\bC^n$, denote $A(\Omega)$ as the set of holomorphic functions on $\Omega$, $C^k(\Omega)$ as the class of $C^k$ functions on $\Omega$, and $C^k_{(p,q)}$ as the set of all forms of type $(p,q)$ with coefficients in $C^k.$

\begin{definition} An open set $\Omega\sub\bC^n$ is a domain of holomorphy if there do not exist non-empty open sets $U_1\sub\Omega$ and $U_2\sub\bC^n$, such that:
\begin{itemize}
\item $U_2$ is connected and $U_2\not\sub\Omega$;
\item For every $f\in A(\Omega)$, there is a function $g\in A(U_2)$ such that $f=g$ on $U_1$.
\end{itemize}\end{definition}
Informally, we can interpret a domain of holomorphy as an open set, where some $f\in A(\Omega)$ cannot be holomorphically extended to a bigger set.

For a compact subset $K\sub \Omega$, the \tit{holomorphically convex hull} of $K$ is defined as
\begin{equation}\label{eq:2.1} \what{K}:= \Big\{z\in \Omega:| f(z)|\leq \sup_{w \in K} \left| f(w) \right| \mbox{ for all } f \in A(\Omega)\Big\} .\end{equation}
\begin{definition} An open set $\Omega\sub\bC^n$ is \tit{holomorph-convex} if for every compact subset $K\sub\Omega$, its holomorphically convex hull $\what{K}\sub\Omega$ is compact.\end{definition}

Recall that a function $\phi:\Omega\to\bR$ is \tit{plurisubharmonic} if it is upper semicontinuous and $\tau\mapsto f(z+\tau w)$ is subharmonic for all $z,w\in\bC^n$. An equivalent definition on the class of $C^2$ functions is that $\phi\in C^2(\Omega)$ is plurisubharmonic if
\begin{equation}\label{eq:3.1}\sum_{j,k=1}^n \frac{\partial^2 \phi(z)}{\partial z_j\partial \overline{z}_k}w_j\overline{w}_k\geq 0,\hspace{5mm} z\in\Omega,w\in\bC^n.\end{equation} That is, the above hermitian form must be positive semi-definite. Moreover, we say that $\phi$ is \tit{strictly plurisubharmonic} if the hermitian form in \eqref{eq:3.1} is $>0$, i.e., positive definite.

\begin{definition} An open set $\Omega\sub\bC^n$ is pseudoconvex if there exists a plurisubharmonic function $\phi\in C^1(\Omega)$ such that
\begin{itemize}
\item For all $c\in\bR$, $\Omega_c:=\{z\in\Omega:\phi(z)<c\}$ is relatively compact in $\Omega$.\end{itemize}\end{definition}
Moreover, we say that $\Omega$ is \tit{strictly pseudoconvex} if the function $\phi$ is strictly plurisubharmonic. We now introduce the following theorem which relates these three classes together.

\begin{theorem}\label{th:C-T} Given an open set $\Omega\sub\bC^n$, the following conditions are equivalent:
\begin{itemize}
\item $\Omega$ is a domain of holomorphy.
\item $\Omega$ is holomorph-convex.
\item $\Omega$ is pseudoconvex. \end{itemize}\end{theorem}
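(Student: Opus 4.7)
The plan is to establish the classical Cartan--Thullen--Oka theorem via the cycle of implications $(1) \Rightarrow (2) \Rightarrow (3) \Rightarrow (1)$.

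For $(1) \Rightarrow (2)$, I would fix a compact $K \subset \Omega$ and show that $\widehat{K}$ is compact. Since $\widehat{K}$ is closed in $\Omega$ as an intersection of sublevel sets of the continuous $|f|$, and bounded because the coordinate functions $z_1,\ldots,z_n$ lie in $A(\Omega)$, the only issue is whether $\widehat{K}$ accumulates on $\partial \Omega$. The crux is a Cauchy-estimate comparison: for any $f \in A(\Omega)$ and $z \in \Omega$, the Taylor series of $f$ at $z$ converges on a polydisc of radius $d_\Omega(z) := \operatorname{dist}(z, \partial \Omega)$, with Cauchy coefficients bounded by $\sup_K |f|$ whenever $z \in \widehat{K}$. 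This means every $f \in A(\Omega)$ extends holomorphically to the polydisc of radius $d_\Omega(K)$ around any $z \in \widehat{K}$; if $d_\Omega(z) < d_\Omega(K)$, such an extension crosses $\partial \Omega$, contradicting the domain-of-holomorphy hypothesis. Hence $d_\Omega(\widehat{K}) \geq d_\Omega(K) > 0$, and $\widehat{K}$ is compact in $\Omega$.

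For $(2) \Rightarrow (3)$, I would construct a smooth plurisubharmonic exhaustion directly. Choose a compact exhaustion $K_1 \Subset K_2 \Subset \cdots$ of $\Omega$, replacing each $K_j$ by its holomorphically convex hull (still relatively compact by hypothesis). For each $j$ and each $p \in \overline{K_{j+1}} \setminus K_j$, holomorph-convexity supplies $f_{j,p} \in A(\Omega)$ with $|f_{j,p}(p)| > \sup_{K_j} |f_{j,p}|$; passing to a finite subcover and rescaling yields countably many $f_{j,k}$, positive constants $c_{j,k}$, and exponents $m_{j,k}$ so that
\begin{equation*}
\phi(z) \;=\; |z|^2 \;+\; \sum_{j,k} c_{j,k}\,|f_{j,k}(z)|^{2m_{j,k}}
\end{equation*}
converges smoothly and satisfies $\phi \to \infty$ on $\Omega \setminus K_j$ as $j \to \infty$. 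Each summand is plurisubharmonic, the $|z|^2$ term enforces strict plurisubharmonicity, and the exhaustion condition $\Omega_c \Subset \Omega$ follows from the divergence of $\phi$ near $\partial \Omega$.

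The main obstacle is $(3) \Rightarrow (1)$, Levi's problem. My approach would follow Hörmander's $L^2$ theory for the $\bar\partial$-operator. The goal is to construct, for any sequence $\{z_\nu\} \subset \Omega$ accumulating on every boundary point, a single $f \in A(\Omega)$ unbounded along $\{z_\nu\}$, which precludes holomorphic extension past any boundary point. For each $\nu$, I would write down a local holomorphic model peaking at $z_\nu$, extend it globally by a cutoff, and correct the resulting $\bar\partial$-error using Hörmander's solvability theorem: on a pseudoconvex $\Omega$ with strictly plurisubharmonic weight $\psi$, every $\bar\partial$-closed $(0,1)$-form $\alpha$ admits $u$ with $\bar\partial u = \alpha$ and a weighted $L^2$-bound controlled by the Levi form of $\psi$. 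Tailoring $\psi$ to blow up appropriately near $z_\nu$ forces $u$ to vanish to high order there, so the corrected function $h_\nu$ remains a genuine peak; a summation $f = \sum a_\nu h_\nu$ with carefully chosen weights yields the desired obstruction to extension. The real technical heart is the $L^2$ solvability of $\bar\partial$ on pseudoconvex domains itself, which requires a substantial functional-analytic development of the $\bar\partial$-complex and is by far the deepest input to the theorem.
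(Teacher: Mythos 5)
The paper itself offers no proof of Theorem~\ref{th:C-T}: it is stated as a known result and the reader is referred to Scheidemann [Theorem 7.3.2], so there is no internal argument to compare yours against. Your cycle $(1)\Rightarrow(2)\Rightarrow(3)\Rightarrow(1)$ is the standard Cartan--Thullen--Oka--H\"ormander architecture and is correct as an outline, with the honest caveat that the entire difficulty of $(3)\Rightarrow(1)$ sits inside the $L^2$ existence theorem for $\bar\partial$, which you invoke as a black box rather than prove. Two places where the sketch is looser than it should be. In $(1)\Rightarrow(2)$, the estimate is not that the Cauchy coefficients of $f$ at $z\in\widehat{K}$ are bounded by $\sup_K|f|$; the correct mechanism is that each derivative $\partial^\alpha f$ lies in $A(\Omega)$, so the hull inequality gives $|\partial^\alpha f(z)|\le\sup_K|\partial^\alpha f|$ for $z\in\widehat{K}$, and the right-hand side is then controlled by Cauchy's inequalities on a compact neighborhood $K_r\subset\Omega$ of $K$; this is what makes the Taylor series converge on a polydisc of radius $r<d_\Omega(K)$ uniformly over $\widehat{K}$. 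In $(3)\Rightarrow(1)$, a single $f$ unbounded along a sequence that merely accumulates at every boundary point does not yet contradict the two-open-set definition of domain of holomorphy: the sequence must be chosen so that every admissible $U_1$ (e.g.\ every ball from a countable basis meeting $\partial\Omega$) contains infinitely many $z_\nu$, and the coefficients $a_\nu$ in $\sum a_\nu h_\nu$ must be chosen inductively to prevent cancellation of the blow-up; many authors instead close the cycle by proving $(3)\Rightarrow(2)$ with the $L^2$ method and then $(2)\Rightarrow(1)$ by the classical Cartan--Thullen construction, which separates these issues cleanly. Finally, note that your step $(2)\Rightarrow(3)$ is essentially the same construction that the paper does carry out explicitly, in the manifold setting, in the proof of Lemma~\ref{le:3.1}.
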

Readers interested in the proof may consult \cite[Theorem 7.3.2]{Sch}, which uses standard metric techniques. Although we do not use Theorem \ref{th:C-T} to prove the results in this paper, the theorem provides a powerful intuition in recognizing the implicit structure from Stein manifolds.

\begin{remark} This equivalence is actually part of a larger set of equivalences, including Levi convexity and what is now known as the local Levi property. When the equivalence of these five conditions was first proven, the main difficulty lied in proving that the local Levi property implies domain of holomorphy. This became known as the Levi problem, named after E.E. Levi in 1911, and it remained unsolved until 1953, first proven by K. Oka \cite{O}. \end{remark}

\subsection{Stein manifolds.} We now provide the original definition of Stein manifolds, and later investigate what equivalent definitions come about. Recall that a \tit{complex manifold} $X^n$ is a real $2n$-manifold (so $n=\dim_{\bC}X$), equipped with holomorphic transition maps.

\begin{definition}A Stein manifold is a complex manifold $X^n$ which satisfies the following axioms:
\begin{itemize}
\item Convexity: $X$ is holomorph-convex;
\item Separation: If $z_1\neq z_2\in X$, then there exists $f\in A(X)$ such that $f(z_1)\neq f(z_2);$
\item Local Coordinates: For all $z\in X$, there exists $n$ functions $f_1,...,f_n\in A(X)$ which form a coordinate system at $z$.
\end{itemize}\end{definition}

By the definition, it is easy to see that every closed complex submanifold of a Stein manifold is Stein, and that the Cartesian product $X_1\times X_2$ of two Stein manifolds is also Stein.

\begin{example} Let $X^1$ be a noncompact Riemann surface (complex manifold of dimension one). Then by Behnke-Stein $(1948)$, $X^1$ must be a Stein manifold, having proved the result by using a version of Runge's approximation theorem.\end{example}

\begin{example} Every domain of holomorphy in $\bC^n$ is a Stein manifold $X^n$. \end{example}
Indeed, because the structure on domains of holomorphy is an underlying motivation for Stein manifolds, many theorems concerning domains of holomorphy apply equally well to Stein manifolds. First, we recall the following theorem, which is just the generalization of the same argument for open sets in $\bC.$
\begin{theorem} \label{th:3.1}Let $\Omega$ be an open set in $\bC^n$. For every compact set $K\sub\Omega$ and every neighborhood $U$ of $K$, there exist constants $C_{\alpha}$ for all multi-orders $\alpha$ such that
\begin{equation}   \sup_K|\partial^\alpha u|\leq C_\alpha||u||_{L^1(U)},\hspace{5mm}u\in A(\Omega).\end{equation}\end{theorem}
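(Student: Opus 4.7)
The plan is to establish a pointwise Cauchy-type bound at each $z \in K$, uniform in $z$, that controls $|\partial^\alpha u(z)|$ by a constant multiple of $\|u\|_{L^1(U)}$. First, by the compactness of $K$ and the openness of $U \supset K$, there is a radius $r > 0$ small enough that the closed polydisc $\overline{D(z,r)} := \{\zeta \in \bC^n : |\zeta_j - z_j| \leq r \text{ for all } j\}$ is contained in $U$ for every $z \in K$. This fixes a neighborhood of each $z \in K$, uniform in $z$, on which $u$ is holomorphic and integrable.

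Next, for any such $z$ and any polyradius $\rho = (\rho_1, \ldots, \rho_n)$ with $0 < \rho_j \leq r$, the iterated Cauchy integral formula for derivatives on $D(z,\rho)$ gives
\[ \partial^\alpha u(z) = \frac{\alpha!}{(2\pi i)^n} \int_{|\zeta_1 - z_1| = \rho_1} \!\!\cdots\! \int_{|\zeta_n - z_n| = \rho_n} \frac{u(\zeta)\, d\zeta_1 \cdots d\zeta_n}{(\zeta_1 - z_1)^{\alpha_1+1} \cdots (\zeta_n - z_n)^{\alpha_n+1}}. \]
Parametrizing $\zeta_j = z_j + \rho_j e^{i\theta_j}$, all unimodular factors drop out when taking absolute values, producing
\[ |\partial^\alpha u(z)| \leq \frac{\alpha!}{(2\pi)^n\, \rho_1^{\alpha_1} \cdots \rho_n^{\alpha_n}} \int_{[0,2\pi]^n} |u(z + \rho e^{i\theta})|\, d\theta_1 \cdots d\theta_n. \]

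The key step is to integrate over radii to convert this distinguished-boundary estimate into a volume estimate. Multiplying both sides by $\rho_1^{\alpha_1+1} \cdots \rho_n^{\alpha_n+1}$ and integrating each $\rho_j$ over $[0,r]$, the left side becomes $|\partial^\alpha u(z)| \prod_{j=1}^n r^{\alpha_j+2}/(\alpha_j+2)$, while the factors $\rho_j\, d\rho_j\, d\theta_j$ on the right assemble, via polar coordinates in each complex plane, into Lebesgue measure on $D(z,r)$. Fubini therefore collapses the right side to $\frac{\alpha!}{(2\pi)^n} \int_{D(z,r)} |u(\zeta)|\, dV(\zeta)$, which is at most $\frac{\alpha!}{(2\pi)^n} \|u\|_{L^1(U)}$ because $D(z,r) \subset U$. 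Solving for $|\partial^\alpha u(z)|$ yields the bound with
\[ C_\alpha = \frac{\alpha!\, \prod_{j=1}^n (\alpha_j + 2)}{(2\pi)^n\, r^{|\alpha|+2n}}, \]
which depends only on $\alpha$, $n$, and $r$, not on $z \in K$ or on $u \in A(\Omega)$. Taking the supremum over $K$ completes the argument.

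There is no serious obstacle here: the proof packages the classical Cauchy estimates together with a radius-integration trick that converts a distinguished-boundary integral into a volume integral. The only care needed is bookkeeping the exponents and factorials so everything lines up and verifying that a single radius $r$ works simultaneously for all $z \in K$, both of which follow from compactness and Fubini's theorem.
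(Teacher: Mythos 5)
Your proof is correct: the iterated Cauchy integral formula, the radial-weight integration that converts the distinguished-boundary integral into a volume integral over the polydisc, and the compactness argument giving a single radius $r$ for all $z\in K$ all check out, and the explicit constant $C_\alpha = \alpha!\prod_j(\alpha_j+2)/\bigl((2\pi)^n r^{|\alpha|+2n}\bigr)$ is computed correctly. The paper does not actually prove this theorem --- it merely recalls it as a known generalization of the one-variable Cauchy estimates and defers to H\"ormander --- so there is no in-paper argument to compare against; your radius-integration derivation is the standard one and is complete, the only cosmetic point being that one should note $U\subset\Omega$ (or work with $U\cap\Omega$) so that the polydiscs lie where $u$ is holomorphic and the $L^1(U)$ norm is meaningful.
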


We now note the following properties, which naturally extend to Stein manifolds by analogous arguments from open sets in $\bC^n$. For brevity, we omit the exact proofs, though the analogous arguments are more or less straightforward.
\begin{theorem} \label{th:3.2}Let $\Omega$ be an open set in $\bC^n$, or $\Omega^n$ a complex manifold. Also, suppose $\phi$ is a strictly plurisubharmonic function in $C^\infty(\Omega)$ such that $\Omega_c:=\{z\in\Omega:\phi(z)\leq c\}$ is relatively compact in $\Omega$ for all $c\in\bR.$ Then every function which is holomorphic in a neighborhood of $\Omega_0$ can be approximated uniformly over $\Omega_0$ by functions in $A(\Omega)$.\end{theorem}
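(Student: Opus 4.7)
The plan is to use the standard Hörmander $L^{2}$ method for the $\bar\partial$-equation: reduce approximation to solving $\bar\partial v = \alpha$ with $v$ controlled in a weighted norm, where $\alpha$ comes from cutting off the given holomorphic function. First I would fix $u$ holomorphic on an open neighborhood $U$ of $\overline{\Omega_{0}}$. Since $\overline{\Omega_{0}}$ is compact and $\phi$ is continuous, there is a $\delta>0$ with $\overline{\Omega_{\delta}}\sub U$. Choose a cutoff $\chi\in C^{\infty}_{c}(\Omega_{\delta})$ with $\chi\equiv 1$ on $\Omega_{\delta/2}$, and set $f:=\chi u$, extended by zero to all of $\Omega$. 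Then $f$ is smooth on $\Omega$, $f=u$ on $\Omega_{\delta/2}$, and the $(0,1)$-form $\alpha:=\bar\partial f = u\,\bar\partial\chi$ is smooth, compactly supported, and vanishes on $\Omega_{\delta/2}\cup(\Omega\setminus\Omega_{\delta})$.

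Next I would produce, for each prescribed $\varepsilon>0$, a smooth function $v$ on $\Omega$ with $\bar\partial v=\alpha$ and $v$ uniformly small on $\Omega_{0}$. The trick is to fabricate a weight that is small on $\Omega_{0}$ but huge on $\supp\alpha$: choose a convex, rapidly increasing $\rho:\bR\to\bR$ with $\rho(t)=0$ for $t\leq 0$ and $\rho(t)$ arbitrarily large for $t\geq \delta/2$, and set $\psi:=\rho\circ\phi$. Since $\phi$ is strictly plurisubharmonic and $\rho$ is convex and nondecreasing, $\psi$ is plurisubharmonic on $\Omega$ and, together with $\phi$ itself, yields a weight $\psi+\phi$ whose Levi form dominates the required positivity. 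Hörmander's $L^{2}$ existence theorem then produces a solution $v\in L^{2}_{\mathrm{loc}}(\Omega)$ of $\bar\partial v=\alpha$ with
\begin{equation*}
\int_{\Omega}|v|^{2}e^{-\psi}\,d\lambda \;\leq\; C\int_{\Omega}|\alpha|^{2}e^{-\psi}\,d\lambda.
\end{equation*}
Because $\alpha$ is supported where $\phi\geq \delta/2$, the right side is bounded by $e^{-\rho(\delta/2)}\|\alpha\|_{L^{2}}^{2}$, which we can force as small as we like by taking $\rho$ large on $[\delta/2,\infty)$; on $\Omega_{0}$ the weight $e^{-\psi}$ equals $1$, so $\|v\|_{L^{2}(\Omega_{0})}$ is correspondingly small.

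With $v$ in hand, set $g:=f-v$. Then $\bar\partial g=\bar\partial f-\alpha=0$, so $g\in A(\Omega)$ (smoothness of $v$ follows from interior elliptic regularity for $\bar\partial$, since $\alpha$ is smooth), and on $\Omega_{0}$ we have $g-u=-v$. To upgrade the $L^{2}$ smallness of $v$ on a neighborhood of $\Omega_{0}$ to uniform smallness on $\Omega_{0}$, I would apply Theorem~\ref{th:3.1} to the holomorphic function $g-u$ on the slightly larger set $\Omega_{\delta/2}$ (where $g-u=-v$ is holomorphic because both $g$ and $u$ are), obtaining $\sup_{\Omega_{0}}|g-u|\leq C\|v\|_{L^{1}(\Omega_{\delta/2})}\leq C'\|v\|_{L^{2}(\Omega_{\delta/2})}$, which is $<\varepsilon$ by our choice of $\rho$.

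The main obstacle is the Hörmander $L^{2}$-estimate step: producing a plurisubharmonic weight whose Levi form is uniformly bounded below by a positive function large enough to absorb the $L^{2}$ norm of $\alpha$, while simultaneously being negligible on $\Omega_{0}$. The strict plurisubharmonicity of $\phi$ and the relative compactness of the sublevel sets are exactly what make the construction of $\rho\circ\phi$ work, and they are also the only ingredients needed for the argument to transfer verbatim to the complex-manifold setting (via the Andreotti--Vesentini version of Hörmander's theorem), since once $\phi$ is a strictly plurisubharmonic exhaustion the $\bar\partial$ machinery is local in nature.
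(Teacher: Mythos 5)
Your proposal is correct and follows essentially the same route as the source the paper defers to: the paper omits the proof of Theorem~\ref{th:3.2} and cites H\"ormander, whose argument is exactly this scheme of cutting off $u$, solving $\overline{\partial}v=u\,\overline{\partial}\chi$ with a weight $e^{-\rho\circ\phi}$ that is trivial on $\Omega_0$ and arbitrarily large on $\supp\overline{\partial}\chi$, and then upgrading the $L^2$ smallness of $v$ to uniform smallness via the interior estimates of Theorem~\ref{th:3.1}. The only cosmetic point is that the cutoff should be taken identically $1$ on a neighborhood of the compact set $\Omega_{\delta/2}$ (not merely on that closed set) so that $\overline{\partial}\chi$ genuinely vanishes there.
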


\begin{theorem} \label{th:3.3} Let $\Omega$ is pseudoconvex open set in $\bC^n$, or $\Omega^n$ a complex manifold such that there exists a strictly plurisubharmonic function $\phi\in C^\infty(X)$ such that $\Omega_c:=\{z\in X:\phi(z)<c\}\sub\sub X$ for every $c\in\bR.$ Then the equation $\overline{\partial}u=f$ has a solution $u\in C^{\infty}_{(p,q)}(X)$ for every $f\in C^\infty_{(p,q+1)}(X)$ such that $\overline{\partial}f=0.$ \end{theorem}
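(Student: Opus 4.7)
The plan is to follow Hörmander's $L^2$-method for the inhomogeneous $\bar\partial$-equation. Fix a Hermitian metric on $X$ and, for an auxiliary weight $\varphi = \chi\circ\phi$ with $\chi:\bR\to\bR$ smooth, convex, and sufficiently rapidly increasing, form the weighted $L^2$ spaces $L^2_\varphi(X,\Lambda^{p,q})$. Regard $\bar\partial$ as a densely defined closed operator between $L^2_{\varphi,(p,q)}$ and $L^2_{\varphi,(p,q+1)}$, and let $\bar\partial^*_\varphi$ denote its Hilbert-space adjoint with respect to these weighted inner products.

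The technical heart is the weighted \emph{basic estimate}: integration by parts, together with the commutator identities for $[\bar\partial,\bar\partial^*_\varphi]$, yields for $\alpha\in C^\infty_c(X,\Lambda^{p,q+1})$
\[
\int_X \sum_{j,k,|J|=q} \varphi_{j\bar k}\,\alpha^{jJ}\,\overline{\alpha^{kJ}}\, e^{-\varphi}\,dV \;\leq\; \|\bar\partial\alpha\|^2_\varphi + \|\bar\partial^*_\varphi\alpha\|^2_\varphi.
\]
Strict plurisubharmonicity of $\phi$, combined with the convexity and growth of $\chi$, makes the Levi form $(\varphi_{j\bar k})$ dominate any prescribed positive multiple of the chosen metric, so the left-hand side controls $\|\alpha\|^2_\varphi$. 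A cut-off / density argument, which uses crucially the hypothesis $\Omega_c\sub\sub X$ to show that compactly supported smooth forms are dense in $\operatorname{Dom}(\bar\partial)\cap\operatorname{Dom}(\bar\partial^*_\varphi)$, then extends this to an abstract estimate on forms orthogonal to $\ker\bar\partial$.

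Given $f\in C^\infty_{(p,q+1)}(X)$ with $\bar\partial f=0$, choose $\chi$ so large that $f\in L^2_\varphi$. By the basic estimate the linear functional $\bar\partial^*_\varphi\alpha\mapsto\langle\alpha,f\rangle_\varphi$ is well-defined and bounded on the range of $\bar\partial^*_\varphi$, and the Riesz representation theorem produces $u\in L^2_\varphi(X,\Lambda^{p,q})$ with $\bar\partial u = f$ in the sense of distributions. Smoothness of $u$ then follows from local hypoellipticity of the $\bar\partial$-complex: the operator $\bar\partial\oplus\bar\partial^*$ is elliptic on $(p,q)$-forms, so $\bar\partial u = f\in C^\infty$ forces $u\in C^\infty_{(p,q)}(X)$.

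The main obstacle is meeting three conflicting demands on $\chi$ simultaneously: (a) $i\partial\bar\partial\varphi$ must dominate the fixed metric strongly enough to close the basic estimate, which pushes $\chi''$ up; (b) $f$ must lie in $L^2_\varphi$, which pushes $\chi$ itself up; and (c) $C^\infty_c$-forms must remain dense in $\operatorname{Dom}(\bar\partial^*_\varphi)$, which forbids $\chi$ from growing too wildly relative to the metric. The exhaustion by the relatively compact sets $\Omega_c$ is precisely what lets these conditions be reconciled by an inductive choice of $\chi$ along a sequence of levels $c_n\to\infty$; once that construction is in place, the existence step is pure functional analysis and the regularity step is standard interior elliptic theory.
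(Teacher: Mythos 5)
Your sketch is correct in outline and is precisely the argument the paper defers to: the paper gives no proof of this theorem, instead citing H\"ormander \cite{H} (Corollaries 4.2.6 and 5.2.6), whose proof is exactly the weighted $L^2$-method you describe --- the basic estimate with weight $\chi\circ\phi$, density of compactly supported forms via the exhaustion $\Omega_c\sub\sub X$, Riesz representation, and elliptic regularity. The only step to tighten is the last one: for $q\geq 1$ smoothness of $u$ does not follow from $\bar\partial u=f$ alone (one could add a non-smooth $\bar\partial$-closed distributional form), so you must record that the Riesz/minimal solution lies in $\overline{\mathrm{range}(\bar\partial^*_\varphi)}$ and hence also satisfies $\bar\partial^*_\varphi u=0$, after which interior regularity for the full elliptic system $(\bar\partial,\bar\partial^*_\varphi)$ applies.
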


Readers interested in their proofs may look at \cite[Theorems 4.3.1 and 5.2.8]{H} and \cite[Corollaries 4.2.6 and 5.2.6]{H} respectively.

\section{Stein Manifolds and Pseudoconvexity}
We now prove a non-trivial extension of a property on domains of holomorphy. In particular, we shall establish a equivalence of definitions, similar to domains of holomorphy and pseudoconvexity, in the sense of manifolds. Not surprisingly, this equivalence requires smoothness and nondegeneracy on the plurisubharmonic function, as opposed to only continuity for $\bC^n$.

\begin{lemma} \label{le:3.1}If $X^n$ is a Stein manifold, then $X$ admits a strictly plurisubharmonic function $\phi\in C^{\infty}(X)$ such that
\begin{itemize}
\item For all $c\in\bR$, $X_c:=\{z\in X:\phi(z)<c\}$ is relatively compact in $X$.
\end{itemize}\end{lemma}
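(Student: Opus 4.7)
\textbf{The plan} is to construct $\phi$ as an infinite series $\sum_j\phi_j$ of squared moduli of global holomorphic functions, organized shell-by-shell along a compact exhaustion by holomorph-convex sets: on the $j$-th shell, $\phi_j$ will be large (forcing the exhaustion), strictly plurisubharmonic (forcing strict positivity of the total Levi form), while staying small on the interior compactum $K_j$ (so the full series converges in $C^\infty$). To begin, since $X$ is second countable it admits a compact exhaustion $L_1\sub L_2\sub\cdots$; by the convexity axiom each $\what{L_j}$ is compact, and a routine thinning yields compacta $K_j$ (with $K_0:=\varnothing$) such that $K_j=\what{K_j}$, $K_j\sub\operatorname{int}(K_{j+1})$, and $\bigcup_j K_j=X$. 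Write $A_j:=K_{j+1}\setminus\operatorname{int}(K_j)$, so $\{A_j\}$ covers $X$.

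For each $z\in A_j$, since $z\notin\what{K_j}$ there exists $f\in A(X)$ with $|f(z)|>\sup_{K_j}|f|$; after rescaling and then raising to a sufficiently high power, one arranges simultaneously $\sup_{K_j}|f|<2^{-j}$ and $|f(z)|^2>j+1$. Compactness of $A_j$ then yields finitely many holomorphic functions $f_{j,1},\ldots,f_{j,m_j}\in A(X)$ with this uniform normalization and $\max_k|f_{j,k}(z)|^2>j+1$ on all of $A_j$. In parallel, the local-coordinates axiom supplies at each $z\in A_j$ holomorphic functions $g_{z,1},\ldots,g_{z,n}\in A(X)$ forming a coordinate system at $z$, so $\sum_i|g_{z,i}-g_{z,i}(z)|^2$ has Levi form equal to the identity in those coordinates and is therefore strictly plurisubharmonic on a neighborhood of $z$. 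Covering $A_j$ by finitely many such neighborhoods and multiplying each $g_{z,i}-g_{z,i}(z)$ by a tiny positive constant produces further global holomorphic functions $h_{j,1},\ldots,h_{j,n_j}\in A(X)$ with $\sup_{K_j}|h_{j,\ell}|<2^{-j}$, while $\sum_\ell|h_{j,\ell}|^2$ remains strictly plurisubharmonic on a neighborhood of $A_j$.

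Set
\[
\phi:=\sum_{j=0}^\infty\Big(\sum_{k=1}^{m_j}|f_{j,k}|^2+\sum_{\ell=1}^{n_j}|h_{j,\ell}|^2\Big).
\]
Each summand is smooth and plurisubharmonic, the Levi form of $|F|^2$ being $|dF(\cdot)|^2\geq 0$. On any fixed $K_N$ every summand with $j\geq N$ is bounded by $4^{-j}$, and pulling the $f_{j,k}$ and $h_{j,\ell}$ back via holomorphic charts and invoking Theorem \ref{th:3.1} extends this decay to all partial derivatives; hence the series converges in $C^\infty_{\text{loc}}(X)$ and $\phi\in C^\infty(X)$ is plurisubharmonic. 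Given $z_0\in X$, pick $j_0$ with $z_0\in A_{j_0}$: the finite partial contribution $\sum_\ell|h_{j_0,\ell}|^2$ is strictly plurisubharmonic at $z_0$ while every other term has non-negative Levi form, so $\phi$ is strictly plurisubharmonic at $z_0$. Finally $\phi(z)>j+1$ on $A_j$ forces $\{\phi<c\}\sub K_{\lceil c\rceil}$, which is compact. \textbf{The main obstacle} is reconciling the three competing demands---$C^\infty$-summability, unbounded growth along the exhaustion, and strict positivity of the Levi form---on one sequence; the two-part construction isolates them, with the $f_{j,k}$ carrying the growth and the $h_{j,\ell}$ carrying the strict positivity, while the uniform $2^{-j}$ normalization combined with Theorem \ref{th:3.1} controls the derivative series.
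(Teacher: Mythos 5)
Your proposal is correct in substance and takes essentially the same route as the paper's proof (and H\"ormander's original argument): a holomorphically convex compact exhaustion, high powers of separating functions making $\sum_k|f_{j,k}|^2$ small on $K_j$ and large on the outer shell, coordinate functions rescaled to be small so as to force strict positivity of the Levi form, and Cauchy estimates (Theorem \ref{th:3.1}) for $C^\infty$ convergence. Two bookkeeping points should be patched: for $z\in A_j=K_{j+1}\setminus\operatorname{int}(K_j)$ lying on $\partial K_j$ the claim $z\notin\what{K_j}$ fails (separate from $K_{j-1}$ instead, or use an open collar $U_j\supset K_j$ as the paper does), and the bound $4^{-j}$ \emph{per summand} does not by itself control the block of $m_j$ summands at level $j$ --- you should normalize the whole block sum below $2^{-j}$ (e.g.\ by raising powers again after the finite selection), which is harmless since the lower bound on the shell only improves.
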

\begin{proof}
Suppose $X^n$ is a Stein manifold, and let $K\sub X$ be compact and $U$ an open neighborhood of $\what{K}$, the holomorphically convex hull of $K$. By the convexity axiom on $X$, we can choose a sequence of compact subsets of $\Omega$,
\begin{equation}K_1=\what{K}\sub K_2=\what{K_2}\sub K_3=\what{K_3}\sub...,\end{equation}
such that $\what{K_j}=K_j$ and $\cup K_j=X$ for all $j\in\bN.$ Let $U_j$ be open sets such that $K_j\sub U_j\sub K_{j+1}$ and $U_1\sub U.$

Since $\what K_j=K_j$, then for all $j$, we can choose functions $f^j_k\in A(X)$ for $k=1,...,k_j$, such that
\begin{equation}\label{eq:3.2}|f^j_k|<1\implies \sum_{k=1}^{k_j} |f^j_k(z)|^2<2^{-j},\hspace{5mm}z\in K_j\end{equation}\vspace{-2mm}
\begin{equation}\label{eq:3.3}\max_k|f^j_k(z)|>1\implies \sum_{k=1}^{k_j} |f^j_k(z)|^2>j,\hspace{5mm}z\in K_{j+2}\bs U_j.\end{equation}
We note the implications are true, by taking sufficiently large powers of $f^j_k$ and rearranging. Now by the local coordinate axiom, we can also choose $n$ functions forming a system of local coordinates at any $z\in K_j$. Consider the function
\begin{equation}\phi:X\to\bR,\end{equation}
$$z\longmapsto\Big(\sum_{j\in\bN}\sum_{k=1}^{k_j}|f^j_k(z)|^2\Big)-1.$$
By \eqref{eq:3.2}, the inner series is bounded by $2^{-j}$, so the full series converges, and by \eqref{eq:3.3}, we see that $\phi(z)>j-1$ for all $z\in X\bs U_j$. Note that the series
\begin{equation}\sum_{j,k}f^j_k(z)\overline{f^j_k(\zeta)}\end{equation}
converges uniformly on compact subsets of $X\times X$, so the sum is holomorphic on $z$ and its complex conjugate is holomorphic on $\zeta$. Then $\phi$ is a smooth function. By above, it is also easy to see that $\phi$ is plurisubharmonic.

It remains to show that $\phi$ is strictly plurisubharmonic. Suppose for some $z_0\in X$,
\begin{equation}\label{eq:3.4}\sum_{i=1}^n w_i\frac{\partial f^j_k(z)}{\partial z_{i}}=0\hspace{5mm}\text{ for all $j,k$.}\end{equation}
Then because there always exists $n$ functions $f^j_k$ forming a local coordinate system at $z$, $w=0$. Thus, \eqref{eq:3.4} is positive-definite, so we are done.
\end{proof}
\begin{remark}\label{re:3.1} Note that the proof does not require the separation axiom, only using the holomorph-convex and local coordinates axioms. This will become important when establishing an equivalence between the two conditions of the lemma, as it will prove that the separation axiom is not necessary in defining Stein manifolds.\end{remark}

We now prove the implication from the other side, in order to obtain the following theorem. Note that the proof essentially mimicks the argument seen in \cite{H}.

\begin{theorem} \label{th:3.5} A complex manifold $X^n$ is a Stein manifold if and only if it admits a strictly plurisubharmonic function $\phi\in C^{\infty}(X)$ such that
\begin{itemize}
\item For all $c\in\bR$, $X_c:=\{z\in X:\phi(z)<c\}$ is relatively compact in $X$.\end{itemize}\end{theorem}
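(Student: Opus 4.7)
The forward direction is Lemma \ref{le:3.1}, so the task is to prove the converse. Assume $X^n$ admits a strictly plurisubharmonic $\phi \in C^\infty(X)$ with relatively compact sublevel sets; I would verify the three defining axioms of a Stein manifold by leveraging Theorem \ref{th:3.3} to convert local holomorphic data at a point $z_0 \in X$ into a globally defined function in $A(X)$. The workhorse construction is as follows. Given local holomorphic coordinates $w = (w_1,\ldots,w_n)$ centered at $z_0$, a cutoff $\chi$ with $\chi \equiv 1$ near $z_0$ and supported in the chart, and a prescribed local holomorphic germ $g$, extend $v := \chi g$ by zero to $X$. Then $\overline{\partial} v = g\,\overline{\partial}\chi$ is a smooth $\overline{\partial}$-closed $(0,1)$-form supported away from $z_0$. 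Applying the weighted $L^2$ refinement of Theorem \ref{th:3.3} with the modified plurisubharmonic weight $\phi + 2(n+k)\log|w|$ (smoothly extended to $X$ and still strictly plurisubharmonic off $z_0$) produces a solution $u \in C^\infty(X)$ of $\overline{\partial} u = \overline{\partial} v$ whose finite weighted $L^2$ norm forces vanishing to order $k+1$ at $z_0$. Hence $f := v - u \in A(X)$ agrees with $g$ to order $k$ at $z_0$.

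With this building block the three axioms follow by suitable choices:
\begin{itemize}
\item For \emph{local coordinates}, apply the construction with $g(w) = w_i$ for $i=1,\ldots,n$ and $k = 1$; the resulting $f_1,\ldots,f_n \in A(X)$ have Jacobian equal to the identity at $z_0$, so they form a local coordinate system.
\item For \emph{separation}, given $z_1 \neq z_2$, work in coordinates around $z_1$ and adjoin to the weight an additional logarithmic singularity at $z_2$, producing $f \in A(X)$ with $f(z_1) = 1$ and $f(z_2) = 0$.
\item For \emph{holomorph-convexity}, given compact $K \subset X$ set $c = \sup_K \phi$ and suppose $\phi(z_0) > c$. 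A quantitative variant of the construction produces $f \in A(X)$ with $f(z_0) = 1$ and $\sup_K |f|$ arbitrarily small, using the weighted $L^2$ bound on $u$ together with the interior elliptic estimates of Theorem \ref{th:3.1} to control $|u|$ uniformly on the compact set $\overline{X_c} \supset K$. This forces $\widehat K \subset \overline{X_c}$, which is compact.
\end{itemize}

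The principal obstacle is not the orchestration of these local-to-global arguments but the passage from the bare solvability statement of Theorem \ref{th:3.3} to its weighted $L^2$ version with plurisubharmonic weights carrying prescribed logarithmic singularities, which is what enforces the pointwise vanishing of $u$ and the quantitative smallness on $K$. This weighted estimate is the analytic engine of H\"ormander's method; once it is granted, imposing any finite-order vanishing condition at a chosen point becomes a matter of tuning the logarithmic exponent, and the verification of the three Stein axioms reduces to routine applications of the core construction above.
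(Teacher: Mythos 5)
Your outline follows a genuinely different route from the paper's. The paper proves the converse by H\"ormander's peak-function method: at $w_0$ it constructs (via the Taylor expansion of $\phi$) a local holomorphic $g_0$ with $\operatorname{Re}g_0 < \phi-\phi(w_0)$, forms $g_t=\psi g e^{tg_0}-u_t$ with a \emph{smooth} weight $\phi_c$ on the sublevel set $X_c$, lets the large parameter $t\to\infty$ to make the correction $u_t$ and its first derivatives small (via Theorem \ref{th:3.1}), and then needs the Runge-type approximation of Theorem \ref{th:3.2} to pass from $A(X_c)$ to $A(X)$. You instead propose the singular-weight method: add logarithmic poles $2(n+k)\log|w|$ to the weight so that finiteness of the weighted $L^2$ norm forces the correction $u$ to vanish to order $k+1$, giving jet interpolation directly in $A(X)$ and bypassing the approximation step. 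This is a standard and legitimate alternative, and your treatments of the local-coordinates and separation axioms are correct in outline. Be aware, though, that the ``weighted refinement'' you invoke is strictly more than Theorem \ref{th:3.3} as stated: you need solvability with an $L^2$ bound for a plurisubharmonic weight that is singular ($-\infty$ at a point) on a manifold not yet known to be Stein. The paper also reaches into the proof of Theorem \ref{th:3.3}, but only for a smooth, bounded-below weight; your version requires the full H\"ormander--Demailly estimate with singular weights (available because the strictly plurisubharmonic exhaustion supplies a complete K\"ahler metric), and you are right to flag this as the analytic crux.

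The one genuine gap is in your holomorph-convexity step. The logarithmic pole at $z_0$ forces $u(z_0)=0$, hence $f(z_0)=1$, but it gives no smallness of $u$ --- and hence of $f=-u$ --- on $K$: the weighted estimate only bounds $\|u\|$ by $\|\overline{\partial}v\|$, which is some fixed finite number, not a quantity tending to $0$. To get $\sup_K|f| < |f(z_0)|$ you must introduce a large parameter that exploits the gap $\phi(z_0) > c = \sup_K\phi$: either replace the weight by $e^{-N\phi}$ and let $N\to\infty$ (so that the mass of $\overline{\partial}v$, supported where $\phi\approx\phi(z_0)$, is exponentially penalized relative to $K$), or use the paper's peak function $e^{tg_0}$ with $t\to\infty$. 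Your phrase ``a quantitative variant of the construction'' is exactly where this missing mechanism must go; without it the convexity axiom does not follow. Once that parameter is inserted (and the interior estimates of Theorem \ref{th:3.1} are used, as you say, to convert the $L^2$ smallness on a neighborhood of $\overline{X_c}$ into sup-norm smallness on $K$), the argument closes, and the conclusion $\widehat{K}\sub\overline{X_c}$ compact is correct.
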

\begin{proof}
By Lemma \ref{le:3.1}, we have established the forward implication. Now suppose we have such a strictly plurisubharmonic function $\phi\in C^\infty(X)$. We first assert the following claim.

\begin{claim} For all $w_0\in X$, there exists a neighborhood $U$ and function $f\in A(U)$, such that $f(w_0)=0$ and
\begin{equation} \text{Re}f(z)<\phi(z)-\phi(w_0),\hspace{5mm} z\neq w_0\in U.\end{equation}\end{claim}

\begin{proof} We examine $w_0$ in local coordinates and obtain the inequality by looking at its Taylor expansion. Let $(z_1,..,z_n)$ represent the local coordinates at $w_0$ in a neighborhood $U$, such that the corresponding coordinate for $w_0$ is the origin. Then by Taylor series expansion, we can represent $\phi$ as
\begin{equation}\label{eq:3.9} \phi(z)=\phi(0)+ \text{Re}f(z)+\sum_{j,k=1}^n \frac{\partial^2\phi(0)}{\partial z_j\partial\overline{z}_j}z_j\overline{z}_k + O(|z|^3),\end{equation}
where $f$ is a polynomial of degree $\leq 2$ and $f(0)=0$, represented by $f(z)=az^2+bz$. By definition of $\phi$, the hermitian form in \eqref{eq:3.9} is positive definite, so for non-zero $z\in U$,
\begin{equation} \phi(z)>\phi(0)+\text{Re}f(z)\implies \text{Re}f(z)<\phi(z)-\phi(w_0).\end{equation}\end{proof}

Now let $w_0\in X$. By the above claim, there exists a neighborhood $U$ and function $g_0\in A(U),$ so that $w_1\not\in U$ and $w_0$ is covered by a set of local coordinates. Also, let $U_1$, $U_2$ be other neighborhoods of $w_0$, such that $U_1$ is relatively compact in $U_2$, and $U_2$ is relatively compact in $U.$ Let $\psi$ be a smooth function compactly supported on $X$ by $U_2$ and $\psi(U_1)=1.$

We aim to use $\psi$ as a smooth cutoff function. Note that supp$(\overline{\partial}\psi)\in\overline{U}_2\bs U_1,$ so there exists $c>\psi(w_0)$ and $\epsilon>0$, such that
\begin{equation} \label{eq:3.10}\text{Re }g_0(z)<-\epsilon,\hspace{5mm} z\in\text{supp}(\overline{\partial}\psi)\text{ and }\phi(z)<c.\end{equation}

From the proof of Theorem \ref{th:3.3} \cite[p.126]{H}, there exists a function $\phi_c\in C^{\infty}(X_c),$ bounded from below in $X_c$, such that the solution $\overline{\partial}u=f$ for every $f\in L^2_{0,1}(X_c,\phi_c)$ with $\overline{\partial}f=0$ has a solution $u\in L^2(X_c,\phi_c)$ where
\begin{equation}\label{eq:3.11}||u||_{\phi_c}\leq||f||_{\phi_c}.\end{equation}
Also by Theorem \ref{th:3.3}, note that $u$ must be smooth if $f$ is. Now let $g\in A(U)$ and consider with a sufficiently large parameter $t$,
\begin{equation} g_t=\psi ge^{tg_0}-u_t,\end{equation}
where we aim to construct $u_t$ such that $g_t\in A(X_c)$ and $u_t$ is small. Note that $g_t$ is holomorphic if
\begin{equation}\label{eq:3.12} \overline\partial(\psi ge^{tg_0}-u_t)=0\implies\overline\partial u_t=ge^{tg_0}\overline\partial\psi.\end{equation}
Let $f_t:=\overline\partial u_t.$ Then by \eqref{eq:3.10}, $||f_t||_{\psi_c}=O(e^{-\epsilon t})$ for fixed $g$, so we can apply \eqref{eq:3.11}. Thus, $f_t$ has a solution $u_t$ such that
\begin{equation}\label{eq:3.13} ||g_t||_{\psi_c}=O(e^{-\epsilon t}),\hspace{5mm}t\to\infty.\end{equation}
Note that by \eqref{eq:3.12},  $u_t$ is holomorphic in the complement of the support of $\overline\partial \psi$ in $X_c$. By Theorem \ref{th:3.1}, we obtain for $w_1\neq w_0\in X$,
 \begin{equation}g_t(w_1)=u_t(w_1)\to0\text{ and }g_t(w_0)=g(w_0)+u_t(w_0)\to g(w_0),\hspace{5mm} t\to\infty.\end{equation}
Without loss of generality, let $g(w_0)=1$, so $g_t(w_1)\neq g_t(w_0)$ for $t$ sufficently large. By Theorem \ref{th:3.2}, we can locally approximate $g_t$, on $\overline{X_{\phi(w_0)}}$, by a function $h\in A(X)$ such that $h(w_0)\neq h(w_1)$. Thus, we have satisfied the separation axiom.

Moreover, \eqref{eq:3.13} implies that for every $c<\phi(w_0)\in\bR,$
\begin{equation}\label{eq:3.14}\int_{X_c}|g_t|^2dV\to0,\hspace{5mm}t\to\infty.\end{equation}
By Theorem \ref{th:3.1}, $g_t\to0$ uniformly on compact subsets of $X_c$. If $c'<c$, then $|g_t|<\frac{1}{2}$ on $X_c$ for sufficiently large $t$ while $g_t(w_0)\to 1$. By Theorem \ref{th:3.2}, we can approximate $g_t$ again by functions in $A(X)$, so $w_0$ cannot belong in $\what{\overline{\Omega}}_{c'}$, the holomorphically convex hull of $\overline{\Omega}_{c'}$, for any $c'<\phi(w_0).$ Thus, $\what{\overline{\Omega}}_{c'}=\overline{\Omega}_{c'}$ for all $c'$. Thus, we have satisfied the holomorph-convexity axiom.

We now prove local coordinates axiom. Without loss of generality, we can suppose $g\in A(U)$ vanishes at $w_0$. Then because $\partial g_t=\partial g-\partial u_t$ at $w_0$ and $\partial u_t\to0$ at $w_0$ by Theorem \ref{th:3.1}, we have that $\partial g_t\to\partial g$ at $w_0$ when $t\to\infty.$

Now let $g^1,...,g^n$ define a coordinate system at $w_0$ formed by functions which vanish there. Then the Jacobian of the corresponding functions $g_t^1,...,g_t^n\in A(X_c)$ with respect to $g^1,...,g^n$ converges to $1$ as $t\to\infty.$ Now by Theorem \ref{th:3.2}, we can approximate $g_t^1,...,g_t^n$ by $h^1,...,h^n\in A(X)$ such that the Jacobian of $h^1,...,h^n$ with respect to $g^1,...,g^n$ is also non-zero at $w_0$. This proves the last axiom.
\end{proof}

Thus, a generalized version of Theorem \ref{th:C-T} still holds on the class of manifolds. This becomes important in deciphering how other problems belonging to several complex variables in $\bC^n$ may translate more loosely to manifolds and sheaves, becoming instrumental in Cartan's later work in particular. We also see that with the admittance of a particularly nice function, we obtain a canonical symplectic structure, built from the manifold's complex structure and plurisubharmonic function. Such consequences shall be delved into deeper within the following section. But first, we note a remarkable property on Stein manifolds.

\begin{corollary} The separation axiom is a consequence of the other two axioms, and so Stein manifolds are classified by holomorph-convexity and how it evaluates points locally.\end{corollary}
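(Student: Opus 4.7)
The plan is to chain together the two results already in hand: Lemma \ref{le:3.1} (with its strengthening from Remark \ref{re:3.1}) and the reverse direction of Theorem \ref{th:3.5}. The key observation is that the separation axiom plays no role in producing the strictly plurisubharmonic exhaustion function, yet the existence of such a function forces separation to hold.

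First I would take $X^n$ to be a complex manifold satisfying only the convexity and local coordinates axioms, and ask whether it must be Stein. The only place separation could have been invoked in the construction of Lemma \ref{le:3.1} would be in ensuring the existence of the nested exhaustion $K_1 \subset K_2 \subset \cdots$ of holomorphically convex compacts, in obtaining the functions $f^j_k$ satisfying \eqref{eq:3.2}--\eqref{eq:3.3}, and in the positive-definiteness argument at the end. Inspecting each of these in turn, one sees that the exhaustion and the functions $f^j_k$ come purely from holomorph-convexity, while the positive-definiteness of the Hermitian form at $z_0$ uses only the local coordinates axiom (any vector $w$ annihilating all $\partial f^j_k / \partial z_i$ must vanish because $n$ of the $f^j_k$ form a local chart at $z_0$). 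This is precisely the content of Remark \ref{re:3.1}, and it yields a strictly plurisubharmonic $\phi \in C^\infty(X)$ with relatively compact sublevel sets $X_c$.

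Having produced such a $\phi$, I would then feed it into the reverse implication of Theorem \ref{th:3.5}. That argument constructs, for each pair $w_0 \neq w_1$ in $X$, holomorphic functions $h \in A(X)$ separating the two points, and it does so using only the presence of the exhaustion $\phi$, the local $L^2$-solvability of $\overline{\partial}$ from Theorem \ref{th:3.3}, and the approximation statement of Theorem \ref{th:3.2}. In particular, separation is derived, not assumed, in that direction. Combining these two steps shows that any complex manifold satisfying convexity and local coordinates automatically satisfies separation, which is the corollary.

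The only subtle point to check, and what I would flag as the main potential obstacle, is that Theorems \ref{th:3.2} and \ref{th:3.3} themselves are formulated for a complex manifold equipped with a strictly plurisubharmonic exhaustion and do not tacitly presuppose the separation axiom on $X$. Since their statements as given in the excerpt require only the existence of $\phi$, this circularity does not arise, and the argument closes cleanly.
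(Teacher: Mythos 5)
Your proposal is correct and follows exactly the paper's own route: apply Lemma \ref{le:3.1} via Remark \ref{re:3.1} (noting its proof uses only convexity and local coordinates) to obtain the strictly plurisubharmonic exhaustion, then invoke the reverse implication of Theorem \ref{th:3.5} to conclude $X$ is Stein. Your additional verification that Theorems \ref{th:3.2} and \ref{th:3.3} do not tacitly presuppose separation is a worthwhile sanity check but does not change the argument.
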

\begin{proof} Let $X$ be a complex manifold satisfying the holomorph-convex and local coordinates axiom. As explained in Remark \ref{re:3.1}, we can still apply Lemma \ref{le:3.1} since its proof does not use the separation axiom. Then by Theorem \ref{th:3.5}, $X$ must be Stein. \end{proof}

\section{A Glimpse of its Symplectic Structure}

\subsection{Liouville domains and manifolds.}
 By the admittance of a strictly plurisubharmonic function, Stein manifolds fall into a broader class of manifolds, one which has seen much development in the past decade. Here, we denote $M^n$ to be a manifold of real dimension $n$. Recall that if a symplectic manifold $M^n$ is exact, then its symplectic form $\omega_M$ has a corresponding \tit{primitive} $\theta_M$. That is, there exists a $1$-form $\theta_M$ satisfying $\omega_M=d\theta_M$. The \tit{Liouville vector field} $Z_M$ is the vector field such that $\omega_M(Z_M,\cdot)=\theta_M$.
\begin{definition}A Liouville domain is a compact exact symplectic manifold with boundary $M^{2n}$, such that $Z_M$ points strictly outwards along $\partial M$. \end{definition}

Given a manifold $M^{2n}$, $h:M\to\bR$ is an \tit{exhausting function} if it is bounded below and proper (continuous such that the inverse image of compact subsets is compact). An exact symplectic structure is \tit{complete} if the flow of $Z_M$ exists for all time.
\begin{definition}A Liouville manifold is a complete exact symplectic manifold $M^{2n}$ which admits an exhausting function $h:M\to\bR$ with the following property:

There exists a sequence $\{c_k\}_{k\in\bN}$ for $c_k\in\bR$, approaching $+\infty$ as $k\to\infty$, such that $dh(Z_M)>0$ along $h^{-1}(c_k)$.\end{definition}
We say that a Liouville manifold $M^{2n}$ has \tit{finite type} if $dh(Z_M)>0$ outside a compact subset $C\sub M$. Note that for any Liouville manifold $M^{2n}$, its corresponding sublevel sets yield an exhaustion of $M$ by Liouville domains.

\begin{property} \label{pr:4.1}A Stein manifold $X^n$ is a Liouville manifold $\wtilde{M}^{2n}$. Moreover, if its plurisubharmonic function can be taken to have a compact critical set, then the Stein manifold is of finite type.\end{property}
\begin{proof}
Now let $X^n$ be a Stein manifold. By Theorem \ref{th:3.5}, $X$ admits a smooth strictly plurisubharmonic function $\phi: X \to \bR$, such that each open set $X_c$ is relatively compact. Note that this implies $\phi$ is our desired exhausting function. Furthermore, $X$ has a $1$-form
\begin{equation}\theta_X:=-d^c\phi,\end{equation}
 where $d^c$ is defined by $d^c(\phi)(X):=d\phi(JX)$, $J$ being the complex structure which evaluates cotangent vectors. $X$ then carries the canonical symplectic form with primitive $\theta_X$, so
 \begin{equation}\omega_X:=-dd^c\phi.\end{equation}
 Fixing $c_k$ to be a regular value of $\phi$, the sublevel set
\begin{equation} \label{eq:sublevel}
M^{2n} := \phi^{-1}(-\infty,c_k],
\end{equation}
equipped with $\omega_X$, is a Liouville domain. Thus, every Stein manifold $X^n$ is a Liouville manifold $\wtilde{M}^{2n}$. The last remark regarding the critical set of $\phi$ follows by construction.\end{proof}

\subsection{Symplectic cohomology.}
We now introduce a particularly useful invariant for distinguishing symplectic manifolds in one of the Liouville classes, first considering Liouville domains. We denote $SH^*$ as short-hand for symplectic cohomology. For a more comprehensive introduction to symplectic cohomology, one may consult Seidel's survey article \cite{S}. Fix $\bK$ to be the desired field of coefficients.
\begin{definition}If $M$ is a Liouville domain of dimension $2n$, then the symplectic cohomology of $M$ with $\bK$-coefficients, $SH^*(M)$, is a $\bZ/2$-graded $\bK$-vector space with a natural $\bZ/2$-graded map
\begin{equation}H^{*+n}(M;\bK)\to SH^*(M).\end{equation}\end{definition}

Given two Liouville domains $U$, $M$ where $\dim U=\dim M$, and an embedding $\epsilon:U\to M$ such that $\epsilon^*\theta_M-c\theta_U$ is exact for some constant $c>0$, then Viterbo's construction \cite{V} assigns to each embedding a pull-back restriction map
\begin{equation}SH^*(\epsilon): SH^*(M)\to SH^*(U).\end{equation}
This is homotopy invariant within the space of all such embeddings, and functorial with respect to composition of embeddings. More generally, by a parametrization argument of Viterbo's construction, this is invariant under isotopies of embeddings, within the same class.

Note that any Liouville domain can be canonically extended to a finite type Liouville manifold by attaching an infinite cone to the boundary. Conversely, any finite type Liouville manifold can be truncated to a Liouville domain which is a sufficiently large sublevel set of the exhausting function. So with the natural correspondence between Liouville domains and Liouville manifolds of finite type, we can naturally define symplectic cohomology on the latter. We aim to generalize this feature for Liouville manifolds of finite type to arbitrary Liouville manifolds.
\begin{definition}If $M$ is a Liouville manifold, then
\begin{equation}\label{eq2} SH^*(M)=\lim_{\leftarrow}SH^*(U),\end{equation}
where the limit is over all pairs $(U,k)$ such that the Liouville vector field $Z_M$ satisfying $\theta_M+dk$ points strictly outwards along $\partial U$, where $U\sub M$ is a compact codimension zero submanifold and $k$ is a function on $M$.  \end{definition}

\subsection{Exotic symplectic structures.} Note that any smooth complex affine algebraic variety is a Stein manifold. By choosing a suitable K\"{a}hler form, we see that smooth complex affine algebraic varieties naturally correspond to Liouville manifolds of finite type. The most recent result on exotic symplectic structures, by Abouzaid-Seidel \cite{A-S} in 2010, shows that there exists infinitely many distinct exotic structures on smooth complex affine varieties of real dimension $n\geq 6$.

However, as seen in Property \ref{pr:4.1}, while Stein manifolds correspond to Liouville manifolds, the outcome may not always be of finite type. Thus, this result cannot be said of Stein manifolds in general. Regardless, for the class of Stein manifolds, we still have the following result by Mc-Lean \cite{M-L} from 2009, which expands on Seidel-Smith's result \cite{S-S} from 2005.

\begin{theorem}[Mc-Lean] There exists an exotic Stein manifold for all complex dimension $n> 2$. Moreover, for complex dimension $n> 3$, there exists infinitely many exotic Stein manifolds of finite type, each of which are pairwise distinct. \end{theorem}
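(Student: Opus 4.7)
The guiding idea is to use the symplectic cohomology $SH^*$ introduced in the previous subsection as an invariant of the Liouville manifold associated to a Stein manifold via Property \ref{pr:4.1}, and therefore --- since a Stein isomorphism induces a Liouville isomorphism (up to rescaling of the primitive $\theta$) --- as an invariant of the Stein structure itself. The standard Stein structure on $\bC^n$, given by $\phi(z) = |z|^2$, has vanishing symplectic cohomology $SH^*(\bC^n) = 0$, a consequence of the fact that the standard contact sphere $(S^{2n-1},\alpha_{\mathrm{std}})$ has no Reeb orbits whose Floer classes survive the continuation maps defining $SH^*$. Hence, to produce an exotic Stein manifold in complex dimension $n$ it suffices to exhibit a Stein manifold $X^n$ smoothly diffeomorphic to $\bC^n$ with $SH^*(X) \neq 0$, and the entire argument is organized around this detection principle.

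\textbf{Construction for $n>2$.} Following the Seidel--Smith and Mc-Lean strategy, I would build $X$ as the total space of a Stein Lefschetz fibration $\pi : X \to \bC$ whose regular fiber $F$ is a Stein manifold of complex dimension $n-1$ carrying a prescribed collection of Lagrangian vanishing spheres $L_1,\ldots,L_r$. The total space is then assembled from the subcritical handles supporting $F\times\bC$ together with one critical Weinstein handle per $L_i$. The $L_i$ are chosen so that the resulting handle decomposition cancels smoothly to give a manifold diffeomorphic to $\bR^{2n}$, yet so that the surgery exact sequence expressing $SH^*(X)$ in terms of Floer complexes $CF^*(L_i,L_j)$ has nonvanishing cohomology. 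The restriction $n > 2$ enters twice: we need $\dim_{\bC} F \geq 2$ so that the spheres $L_i$ are simply connected with well-defined Floer theory, and we need the real dimension $2n \geq 6$ for the smooth $h$-cobordism-type cancellations of the handles that make $X$ diffeomorphic to Euclidean space.

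\textbf{Infinitely many distinct examples and the main obstacle.} For $n > 3$, I would run a parametrized version of the above construction, varying the number of vanishing cycles and their mutual Floer-theoretic linking to obtain a sequence of finite-type Stein manifolds $\{X_k\}_{k\in\bN}$; the finite-type hypothesis is automatic from Property \ref{pr:4.1} because Weinstein handle attachments leave the plurisubharmonic function $\phi$ with a compact critical set. Pairwise distinctness of the $X_k$ as Stein manifolds then reduces, via Viterbo functoriality and the invariance discussed above, to showing that $SH^*(X_k) \not\cong SH^*(X_\ell)$ for $k \neq \ell$ --- for example, by arranging that the total dimension over $\bK$, the Euler characteristic, or the pair-of-pants algebra structure on $SH^*(X_k)$ varies injectively with $k$. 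This last verification is the main obstacle: while the invariance half of the argument is formal, actually computing $SH^*(X_k)$ through the Mc-Lean surgery spectral sequence --- and in particular ruling out accidental collapse of the differentials --- requires a delicate chain-level analysis of the Reeb dynamics on the contact boundary of a sufficiently large sublevel set $\overline{X}_c$, and this is the technical heart of Mc-Lean's argument.
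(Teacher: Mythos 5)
The paper does not prove this theorem at all: it is quoted as a known result of McLean \cite{M-L} (building on Seidel--Smith \cite{S-S}), and the surrounding discussion only comments on the role of symplectic cohomology in such arguments. So there is no in-paper proof to compare against, and your proposal should be judged as a reconstruction of the cited argument. On that score, your outline correctly identifies the actual strategy: $SH^*(\bC^n_{\mathrm{std}})=0$, so any Stein manifold diffeomorphic to $\bR^{2n}$ with nonvanishing $SH^*$ is exotic; the examples are built as total spaces of Lefschetz fibrations; and invariance of $SH^*$ for finite-type Liouville manifolds (the only case where the paper asserts invariance is known) does the distinguishing. This is faithful to how the paper frames the result in the paragraphs following the theorem.

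However, as a proof the proposal is incomplete, and the gaps are exactly the theorem's content rather than routine details. Three things are asserted but not established: (i) that the constructed $X$ is genuinely diffeomorphic to $\bR^{2n}$ (the handle-cancellation claim is stated, not carried out, and for $2n=6$ one is below the range where the $h$-cobordism theorem applies naively); (ii) that $SH^*(X)\neq 0$ --- you correctly flag that computing $SH^*$ through the surgery/spectral-sequence machinery and controlling the differentials is ``the technical heart,'' but flagging it is not doing it; and (iii) pairwise distinctness of the infinite family, for which ``arranging that the dimension or Euler characteristic varies injectively'' is a wish, not an argument (McLean in fact distinguishes his examples by the growth rate / number of idempotents of $SH^*$, obtained via end-connected sums of a single exotic example --- a mechanism your sketch does not supply). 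There is also a minor inaccuracy: the standard contact sphere $(S^{2n-1},\alpha_{\mathrm{std}})$ has plenty of Reeb orbits; the vanishing of $SH^*(\bC^n)$ comes from an index/action or displaceability argument, not from an absence of orbits. None of this makes your outline wrong in direction, but it leaves the proof at the level of the paper's own treatment: a statement of the result together with an indication of which invariant detects it.
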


The main tool used in distinguishing exotic structures from the standard one has been using symplectic (co)homology as an invariant on particular classes of manifolds. Abouzaid-Seidel's argument for their paper uses the fact that symplectic cohomology is invariant under symplectomorphisms between Liouville manifolds of finite type, yet it remains open whether or not this is true for Liouville manifolds in general. The proof of the invariance for finite type relies heavily on the cylindrical end property exclusive to finite type Liouville manifolds, and so no possible extension of the current proof can be taken.

Other symplectic invariants remain useful as well. Harris \cite{Ha} proves the existence of exotic structures by examining how some symplectic properties change under small deformations of its symplectic form. He also shows that symplectic cohomology fails to distinguish them from the standard form, while this invariant succeeds. Ultimately, while symplectic cohomology continues to be a powerful tool, modern developments in other tools and concepts are in the works, in order to discover new properties from a new perspective.

\subsection{Last remarks.}
Stein manifolds provide a rich algebraic, complex, and symplectic structure, all of which stem from the manifold's primary motivation as a domain of holomorphy. For open sets in $\bC$, we see that there always exists a holomorphic function on the open set, such that it cannnot be holomorphically extended onto a bigger set. This is not true for $\bC^n$ in general, and so domains of holomorphy become our natural consideration for this nice structure of open sets in $\bC$.

In turn, Stein manifolds become the natural consideration for this structure on the class of manifolds. From this generalization, we obtain many of the same properties, thus connecting many of the general concepts regarding sheaves, varieties, and manifolds, back down to the more concrete, identifiable spaces with already heavily-developed theorems and properties. As a result, Stein manifolds remain an incredibly useful space to analyze, bridging the gap between two seemingly separate categories.

\end{document}